\newtheorem{theorem}{Theorem}[section]
\newtheorem{corollary}[theorem]{Corollary}
\newtheorem{lemma}[theorem]{Lemma}
\newtheorem{proposition}[theorem]{Proposition}
\newtheorem{thmx}{Theorem}
\theoremstyle{definition}
\theoremstyle{remark}
\newtheorem*{remarks}{Remarks}
\newtheorem*{acknowledgements}{Acknowledgements}
\numberwithin{equation}{section}
\def\R{{\mathbb R}}
\def\Z{{\mathbb Z}}
\def\d{{\mathrm{d}	}	}
\newcommand{\qrnorm}[2]{{L^{#1}_tL^{#2}_x}}
\author{Neal Bez}
\address[Neal Bez]{Department of Mathematics, Graduate School of Science and Engineering,
	Saitama University, Saitama 338-8570, Japan}
\email{nealbez@mail.saitama-u.ac.jp}
\author{Jayson Cunanan}
\address[Jayson Cunanan]{Department of Mathematics, Graduate School of Science and Engineering,
	Saitama University, Saitama 338-8570, Japan}
\email{jcunanan@mail.saitama-u.ac.jp}
\author{Sanghyuk Lee}
\address[Sanghyuk Lee]{Department of Mathematical Sciences and RIM, Seoul National University, Seoul 151-747, Korea}
\email{shklee@snu.ac.kr}
\keywords{}
\begin{document}
	
\begin{abstract}
Strong-type inhomogeneous Strichartz estimates are shown to be false for the wave equation outside the so-called acceptable region. On a critical line where the acceptability condition marginally fails, we prove substitute estimates with a weak-type norm in the temporal variable. We achieve this by establishing such weak-type inhomogeneous Strichartz estimates in an abstract setting. The application to the wave equation rests on a slightly stronger form of the standard dispersive estimate in terms of certain Besov spaces.
\end{abstract}

	\date{\today}
	
	\title{Inhomogeneous Strichartz estimates in some critical cases}
	\maketitle 
	
\section{Introduction}

Consider the following Cauchy problems for the inhomogeneous wave equation
\[(\mathrm{W}) \,\, \begin{cases}
(\partial _t^2 -\Delta)u(t,x) = F(t,x),\quad \quad (t,x)\in\mathbb{R}^{1+d}, d\ge2\\
u(0,\cdot)=f,\quad \partial_t u(0,\cdot)=g,\\
\end{cases}\]
and the inhomogeneous Schr\"odinger equation
 \[(\mathrm{S}) \,\,	\begin{cases}
 (i\partial _t +\Delta)u(t,x) = F(t,x),\quad \quad (t,x)\in\mathbb{R}^{1+d}, d\ge1\\
 u(0,\cdot)=f.\\
 \end{cases}\]
By Duhamel's principle, solutions $u_\mathrm{W}$ and $u_\mathrm{S}$ to $(\mathrm{W})$ and $(\mathrm{S})$, respectively, can be written in the form
 \[u_\mathrm{W}(t,x)=\cos(t\sqrt{-\Delta})f(x)+\frac{\sin(t \sqrt{-\Delta})}{\sqrt{-\Delta}}g(x)-\int_0^t \frac{\sin((t-s)\sqrt{-\Delta})}{\sqrt{-\Delta}} F(s,\cdot)(x) \, \d s.
 \] and
 \[u_\mathrm{S}(t,x)=e^{it\Delta}f(x) - i\int_0^t e^{i(t-s)\Delta}F(s,\cdot)(x) \, \d s.
 \]
When $F$ is identically zero, we obtain the solution of the homogeneous problem, and when the initial data is set to zero we obtain the solution of the inhomogeneous problem.
 
An extremely useful family of estimates, known as \textit{Strichartz estimates}, quantify the size and decay of the solutions of evolution equations such as $(\mathrm{W})$ and $(\mathrm{S})$, typically through the use of mixed-norm spaces $L^q_tL^r_x(\R\times \R^d)$. It is natural to divide such estimates into \emph{homogeneous} Strichartz estimates
\begin{equation*}
\| U(t)f\|_{L^q_tL^r_x} \lesssim \|f\|_\mathcal{H}
\end{equation*}
and \emph{inhomogeneous} Strichartz estimates
\begin{equation*}
\bigg\|\int_{s<t} U(t)U^*(s)F(s,\cdot)\, \d s	\bigg\|_{L^q_tL^r_x} \lesssim \|	F	\|_{L^{\tilde{q}'}_tL^{\tilde{r}'}_x}
\end{equation*}
for an appropriate operator $U(t)$, Hilbert space $\mathcal{H}$, and exponent pairs $(q,r)$ and $(\tilde{q},\tilde{r})$. In the case of the wave and Schr\"odinger equations, it is standard to take $\mathcal{H}$ to be $L^2$ or, more generally, a homogeneous Sobolev space $\dot{H}^s$.

To a large extent, the Strichartz estimates for the wave and Schr\"odinger equations can be considered in a unified manner using the following abstract setting. Let $X$ be a measure space and $\mathcal{H}$ be a Hilbert space. For each $t\in \R$, suppose we have an operator $U(t): \mathcal{H} \rightarrow L^2(X)$ which satisfies the following energy estimate
\begin{equation}\label{e:energy}
	\|U(t)f\|_{L^2(X)}\lesssim\|f\|_{\mathcal{H}}
\end{equation}
and, for some $\sigma > 0$, the dispersive estimate
\begin{equation}\label{e:disp}
	\|U(t)U^*(s)g\|_{L^{\infty}(X)}\lesssim	|t-s|^{-\sigma}\|g\|_{L^1(X)}.
\end{equation}
We say that the exponent pair $(q,r)$ is \emph{sharp $\sigma$-admissible} if
\[
2\le q,r\le \infty, \quad\quad \frac{1}{q} =\sigma\bigg(\frac{1}{2}-\frac{1}{r}\bigg),	\quad\quad	\mathrm{and} \quad\quad		(q,r,\sigma)\neq(2,\infty,1).
\]

\begin{thmx}[Keel--Tao, \cite{KTao}]\label{KTao}
Suppose $U(t)$ satisfies $\eqref{e:energy}$ and $\eqref{e:disp}$. Then, the homogenous Strichartz estimate
\begin{equation}\label{e:homo}
		\|U(t)f\|_{L^q_tL^r_x(\mathbb{R} \times X)}\lesssim \|f\|_{\mathcal{H}}
\end{equation}
and the inhomogeneous Strichartz estimate
\begin{equation}\label{e:inhomo}
		\bigg\|\int_{s<t} U(t)U^*(s)F(s,\cdot)\, \d s	\bigg\|_{L^q_tL^r_x(\mathbb{R} \times X)} \lesssim \|	F	\|_{L^{\tilde{q}'}_tL^{\tilde{r}'}_x(\mathbb{R} \times X)}
\end{equation}
both hold for all sharp $\sigma$-$admissible$ pairs $(q,r)$ and $(\tilde q,\tilde r)$.
\end{thmx}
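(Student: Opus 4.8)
The plan is to recast both \eqref{e:homo} and \eqref{e:inhomo} in bilinear form via the $TT^*$ method and duality, and then to prove them through a combination of the interpolated dispersive estimate, the Hardy--Littlewood--Sobolev inequality, the Christ--Kiselev lemma, and --- for the genuinely endpoint cases --- an abstract dyadic decomposition argument. Writing $\langle\cdot,\cdot\rangle$ for the inner product on $L^2(X)$, estimate \eqref{e:homo} is equivalent by duality to the bound $|\iint\langle U(t)U^*(s)F(s,\cdot),G(t,\cdot)\rangle\,\d s\,\d t|\lesssim\|F\|_{L^{q'}_tL^{r'}_x}\|G\|_{L^{q'}_tL^{r'}_x}$, and \eqref{e:inhomo} for admissible pairs $(q,r),(\tilde q,\tilde r)$ is equivalent to the same bound with $L^{\tilde q'}_tL^{\tilde r'}_x$ replacing the first factor and the $\d s$-integration restricted to the retarded region $s<t$. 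Since shrinking the region of integration only decreases the left side, it suffices to control the retarded bilinear form $T(F,G):=\iint_{s<t}\langle U(t)U^*(s)F(s,\cdot),G(t,\cdot)\rangle\,\d s\,\d t$; by symmetry this also controls the full, non-retarded form, hence \eqref{e:homo}. The hypotheses enter only through the interpolated estimate $\|U(t)U^*(s)h\|_{L^a_x}\lesssim|t-s|^{-\sigma(1-2/a)}\|h\|_{L^{a'}_x}$ for $2\le a\le\infty$, obtained by interpolating \eqref{e:energy} (composed with its adjoint) against \eqref{e:disp}.

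For the classical range $2<q,\tilde q<\infty$ I would argue directly: substituting the interpolated bound into the non-retarded bilinear form and using the identity $\sigma(1-2/r)=2/q$ (valid for sharp $\sigma$-admissible pairs) reduces matters to $\iint|t-s|^{-2/q}h(s)k(t)\,\d s\,\d t\lesssim\|h\|_{L^{q'}}\|k\|_{L^{q'}}$, which is Hardy--Littlewood--Sobolev and holds precisely because $0<2/q<1$; when $q$ or $\tilde q$ equals $\infty$ the required estimate is \eqref{e:energy} itself. Composing \eqref{e:homo} with its dual then gives the non-retarded form of \eqref{e:inhomo}, and the Christ--Kiselev lemma promotes this to the retarded form $T$ as long as $\tilde q'<q$ --- which fails only in the double-endpoint case $q=\tilde q=2$. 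The remaining cases are thus the homogeneous endpoint $(q,r)=(2,\tfrac{2\sigma}{\sigma-1})$, which requires $\sigma>1$, and the retarded estimate when both exponent pairs equal this endpoint; these are the heart of the matter.

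For these I would follow the Keel--Tao scheme: decompose $T=\sum_{j\in\mathbb Z}T_j$, where $T_j$ restricts the integration to $\{s<t,\ t-s\sim2^j\}$, and establish the key bilinear estimate
\[
|T_j(F,G)|\lesssim2^{-j\beta(a,b)}\|F\|_{L^2_tL^{a'}_x}\|G\|_{L^2_tL^{b'}_x},\qquad\beta(a,b):=\sigma\Big(1-\tfrac1a-\tfrac1b\Big)-1,
\]
for $(\tfrac1a,\tfrac1b)$ in a small neighbourhood of the target exponents. On the diagonal $a=b$ this is immediate from the interpolated dispersive bound (the constraint $|t-s|\sim2^j$ produces the factor $2^{-j\sigma(1-2/a)}$) combined with Young's inequality in the $t$-variable against $\chi_{\{|\cdot|\sim2^j\}}$ (which produces the residual factor $2^{j}$); the off-diagonal case needs in addition a bilinear interpolation between two endpoint exponent configurations, and is one of the points to handle with care.

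The crux is the summation over $j$. At the target one has $\beta=0$, so no geometric decay is available and a naive sum diverges; instead I would choose nearby exponent pairs $(a_0,b_0),(a_1,b_1)$ with $\beta(a_0,b_0)<0<\beta(a_1,b_1)$ and regard $(F,G)\mapsto(T_j(F,G))_j$ as a bilinear map into the weighted sequence spaces $\ell^\infty$ carrying those weights. Since the real interpolation space $(\ell^\infty_{\beta_0},\ell^\infty_{\beta_1})_{\theta,1}$ equals $\ell^1$ for the value of $\theta$ with $(1-\theta)\beta_0+\theta\beta_1=0$, and since bilinear real interpolation is admissible with second indices $2,2$ and target index $1$ (because $\tfrac12+\tfrac12\ge1$), one obtains $\sum_j|T_j(F,G)|\lesssim\|F\|_{L^2_tL^{r',2}_x}\|G\|_{L^2_tL^{\tilde r',2}_x}$ with Lorentz norms in the spatial variable; as $r',\tilde r'\le2$ one has $L^{r'}_x\hookrightarrow L^{r',2}_x$ and $L^{\tilde r'}_x\hookrightarrow L^{\tilde r',2}_x$, which upgrades this to the desired $L^{r'}_x,L^{\tilde r'}_x$ bounds, and hence to \eqref{e:homo} and \eqref{e:inhomo} in the endpoint cases. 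The main obstacle throughout is precisely this endpoint: the loss of geometric decay forces the summation to be run through bilinear real interpolation with a Lorentz refinement rather than by elementary means, and requires uniform control of $T_j$ at a two-parameter family of exponents clustered at the endpoint.
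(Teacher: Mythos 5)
The paper does not prove Theorem~\ref{KTao}: it is quoted directly from Keel--Tao \cite{KTao} as background and used as a black box, so there is no internal proof to compare your attempt against. Taken on its own terms, your sketch is a faithful reconstruction of the actual Keel--Tao argument: $TT^*$/duality to pass to the bilinear form, interpolated dispersive bound plus Hardy--Littlewood--Sobolev for the open range $2<q,\tilde q<\infty$, composition of the homogeneous estimate with its adjoint plus Christ--Kiselev for the non-endpoint retarded estimates, and for the endpoint $(2,\tfrac{2\sigma}{\sigma-1})$ the dyadic decomposition $T=\sum_j T_j$ with the localized bilinear bound $|T_j(F,G)|\lesssim 2^{-j\beta(a,b)}\|F\|_{L^2_tL^{a'}_x}\|G\|_{L^2_tL^{b'}_x}$, $\beta(a,b)=\sigma(1-\tfrac1a-\tfrac1b)-1$, summed via bilinear real interpolation $(\ell^\infty_{\beta_0},\ell^\infty_{\beta_1})_{\theta,1}=\ell^1$ with Lorentz-space output. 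One small phrasing issue: the remark that ``shrinking the region of integration only decreases the left side'' does not by itself let you pass from the retarded form to the full form; the correct justification (which you then give) is that the retarded and advanced pieces are bounded symmetrically and their sum is the full form. Also be aware that the endpoint homogeneous estimate is needed not only at the double endpoint $q=\tilde q=2$ but in every case where one of the two exponent pairs is the endpoint pair (so that the $TT^*$ composition is available before Christ--Kiselev); you have it among your ``remaining cases,'' so this does not affect correctness, only the framing. Incidentally, the present paper's Lemma~\ref{l:localbanach} (quoted from Taggart) and Lemma~\ref{l:btrick} are precisely the Banach-valued analogue of your localized estimate and a weak-type substitute for the bilinear real-interpolation summation, so the mechanism you describe for the Keel--Tao endpoint is closely parallel to the one used in the paper for Theorem~\ref{t:qweak}.
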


For the Schr\"odinger equation, we take $U(t) = e^{it\Delta}$ and $X = \mathbb{R}^d$ with $d \geq 1$, in which case \eqref{e:energy} holds with $\mathcal{H}=L^2(\R^d)$ and \eqref{e:disp} holds with $\sigma = \frac{d}{2}$. In this case, Theorem \ref{KTao} leads to a complete characterisation of the exponents $(q,r) \in [2,\infty] \times [2,\infty]$ for which the homogeneous Strichartz estimate
\begin{equation} \label{e:hom_S}
\| e^{it\Delta} f\|_{L^q_tL^r_x} \lesssim \|f\|_2
\end{equation}
holds; i.e. $(q,r)$ is sharp $\frac{d}{2}$-admissible.

For the wave equation, we take $X = \mathbb{R}^d$ with $d \geq 2$, and a standard approach is to consider the homogeneous estimates in terms of the one-sided and frequency-localised propagator $U(t) = e^{it\sqrt{-\Delta}} \chi_0(\sqrt{-\Delta})$, where $\chi_0$ is a bump function supported away from the origin. In this case, \eqref{e:energy} holds with $\mathcal{H}=L^2(\R^d)$ and \eqref{e:disp} holds with $\sigma = \frac{d-1}{2}$. In fact, thanks to the frequency-localisation, a stronger dispersive estimate holds with $(1 + |t-s|)^{-\sigma}$ on the right-hand side and a slight extension of Theorem \ref{KTao} (proved in \cite{KTao}) leads to a complete characterisation of the exponents $(q,r) \in [2,\infty] \times [2,\infty]$ for which the (frequency-localised) homogeneous Strichartz estimate
\begin{equation*}
\|  e^{it\sqrt{-\Delta}} \chi_0(\sqrt{-\Delta}) f\|_{L^q_tL^r_x} \lesssim \|f\|_2
\end{equation*}
holds; i.e. $\frac{1}{q} \leq \sigma(\frac{1}{2}-\frac{1}{r})$ and $(q,r,\sigma)\neq (2,\infty,1)$, with $\sigma = \frac{d-1}{2}$. When $r \neq \infty$, one can remove the frequency localisation by an application of the Littlewood--Paley inequality to give homogeneous Strichartz estimates of the form
\begin{equation} \label{e:hom_W}
\|  e^{it\sqrt{-\Delta}} f\|_{L^q_tL^r_x} \lesssim \|f\|_{\dot{H}^s},
\end{equation}
where $s = d(\frac{1}{2} - \frac{1}{r}) - \frac{1}{q}$. It is important for this application of the Littlewood--Paley inequality that $q,r \in [2,\infty)$. The case $r = \infty$ presents subtleties; for example, although \eqref{e:hom_S} holds when $(q,r,d) = (4,\infty,1)$, the analogous estimate  \eqref{e:hom_W} when $(q,r,d) = (4,\infty,2)$ was shown to fail  by Fang--Wang \cite{FangWang}. We also refer the reader to \cite{M-S} and \cite{GLNY} for the failure of \eqref{e:hom_W} when $(q,r) = (2,\infty)$ for $d=3$ and $d \geq 4$, respectively.

A number of papers have been crucial to the development of Strichartz estimates. A significant contribution in \cite{KTao} was to establish the endpoint case $(q,r) = (2,\frac{2\sigma}{\sigma - 1})$ for the homogeneous estimate \eqref{e:homo} when $\sigma > 1$. We refer the reader to \cite{CazWei,FangWang,GiVe,KTao,Str,TKat,Yaj} and their references within for further discussion.

The problem of determining all possible inhomogeneous Strichartz estimates \eqref{e:inhomo} for the wave and Schr\"odinger equations is still open. Foschi \cite{Fos} and Vilela \cite{Vil}, following the scheme of Keel and Tao \cite{KTao}, independently obtained a wider range of Lebesgue exponents than those given in Theorem \ref{KTao}. Foschi stated his results in the abstract framework above, while Vilela's statement is particular to the Schr\"odinger equation.

\begin{figure}
	\begin{center}
		\tiny
		\begin{tikzpicture}[scale=1.5]
		\draw [<->] (0,2.5) node (yaxis) [left] {$\frac{1}{q}$}
		|- (2.5/2,0) node (xaxis) [right] {$\frac{1}{r}$};
		\node [left]at (0,1) {$\frac{1}{2}$};
		
		\node[below] at (0,0) {$0$};
		\node[below] at (1,0) {$\frac{1}{2}$};
		
		\node[below] at (1/2,0) {\tiny $\frac{\sigma-1}{2\sigma}$};
		\node[below] at (0,0) {0};
		\node[below] at (1,0) {$\frac{1}{2}$};
		
		\draw[dotted] (0,2)--(.5,2);
		\draw[dotted] (1/2,0)--(1/2,1)--(0,1);
		
		\shadedraw[very thin]  (.5,2)--(0,2)--(0,0)--(1,0);
		\draw[dashed] (.5,2)--(1,0);
		\draw[thick]  (1,0)--(.5,1);
		\end{tikzpicture}
	\end{center}
	\caption{Admissible and acceptable exponents.}  \label{f:1}
\end{figure}
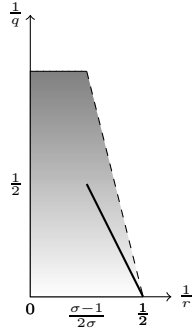

In the case of the Schr\"odinger equation, for the inhomogeneous Strichartz estimate \eqref{e:inhomo} to hold, the scaling condition
\begin{equation} \label{e:scaling_S}
\frac{1}{q} + \frac{1}{\tilde{q}} = \frac{d}{2}\bigg(1 - \frac{1}{r} - \frac{1}{\tilde{r}} \bigg)
\end{equation}
is easily seen to be necessary. Also, in \cite{Fos,Vil}, several further conditions were shown to be necessary for \eqref{e:inhomo} to hold, including
\begin{equation} \label{e:acceptable_S}
\frac{1}{q} < d\bigg(\frac{1}{2} - \frac{1}{r}\bigg)
\end{equation}
and its counterpart for $(\tilde{q},\tilde{r})$ (later, this condition will appear in the definition of $\frac{d}{2}$-acceptability) and
\begin{equation} \label{e:qnecessary_S}
\frac{1}{q} \leq \frac{1}{\tilde{q}'}.
\end{equation}
In \cite{Fos,Vil} it was shown that \eqref{e:inhomo} holds under \eqref{e:scaling_S} and \eqref{e:acceptable_S}, and further assumptions which differ depending on whether $d=1$, $d=2$ or $d \geq 3$; we refer the reader to these papers for the precise statement of their result (see also \cite{Koh, KohSeo} for certain improvements to the range of exponents). Although \eqref{e:inhomo} fails when $(q,r)$ are such that \eqref{e:acceptable_S} marginally fails, that is
$
\frac{1}{q} = d(\frac{1}{2} - \frac{1}{r}),
$
the third author and Seo proved in \cite{LSeo} that certain weak-type estimates of the form
\begin{equation}\label{e:Lseo}
	\bigg\|	\int_{s<t} e^{i(t-s)\Delta}F(s,\cdot)\, \d s	\bigg\|_{L_t^{q,\infty}L^r_x}\lesssim \|F\|_{L_t^{\tilde q'}L_x^{\tilde r'}}
\end{equation}
are true in such a critical case; see Corollary \ref{c:weakS} below for a precise statement.

As suggested above in our discussion of the homogeneous case, the inhomogeneous estimates for the wave equation (without any frequency localisation assumption) follow less easily. Taggart \cite{Tag} succeeded in obtaining a result for the wave equation, analogous to the result obtained by Foschi and Vilela for the Schr\"odinger equation, by making use of a slightly stronger dispersive estimate
\begin{equation} \label{e:dispersiveW_Besov}
\|	e^{it\sqrt{-\Delta}}	f\|_{\dot{B}^{-\rho}_{\infty,2}}\lesssim |t|^{-(d-1)/2}\|	f\|_{\dot{B}^{\rho}_{1,2}}
\end{equation}
for data in the Besov space $\dot{B}^{\rho}_{1,2}$, where $\rho = \frac{d+1}{4}$. This falls outside the scope of the abstract result of Foschi, whose basic assumptions were \eqref{e:energy} and \eqref{e:disp}, as in Theorem \ref{KTao}. Taggart's inhomogeneous estimates for the wave equation were thus built on establishing a generalisation of Foschi's abstract result which accommodated \eqref{e:dispersiveW_Besov}; roughly speaking, one replaces the Lebesgue spaces where the spatial variable lies with appropriate Banach spaces. This approach actually gives rise to stronger inhomogeneous Strichartz estimates of the form
\begin{equation}\label{e:waveTaggart}
\bigg\|	\int_{s<t} e^{i(t-s)\sqrt{-\Delta}}F(s,\cdot) \, \d s	\bigg\|_{L_t^{q}\dot{B}_{r,2}^{-\gamma} 	} \lesssim \|F\|_{L^{\tilde q'}\dot{B}_{\tilde r',2}^{\tilde\gamma} }
\end{equation}
and embeddings involving Besov spaces and Lebesgue spaces yield estimates with $L^q_tL^r_x$ norms. Notice that this approach avoids explicit use of Littlewood--Paley theory. This has a major advantage since it is possible for the temporal exponent $q$ to fall below $2$ and this seems to prevent a straightforward approach based on the standard $L^1 \to L^\infty$ dispersive estimate and Littlewood--Paley theory from succeeding (see the end of Section \ref{section:proofs} for further remarks in this direction).

Our main goal in this note is to consider inhomogeneous Strichartz estimates in certain critical cases. To describe this, following Foschi \cite{Fos}, for $\sigma > 0$ we say that the exponent pair $(q,r)$ is $\sigma$-$acceptable$ if
\[
1\le q< \infty, 2\le r\le\infty, \quad\quad \frac{1}{q} < 2\sigma\bigg(\frac{1}{2}-\frac{1}{r}\bigg),	\quad\quad \mathrm{or}	 \quad\quad (q,r)=(\infty,2).
\]
Figure \ref{f:1} depicts the range of sharp $\sigma$-admissible and $\sigma$-acceptable exponents. 

We remark that the condition $\frac{1}{q} < 2\sigma(\frac{1}{2}-\frac{1}{r})$ appearing in the definition of $\sigma$-acceptability arose in the discussion above in \eqref{e:acceptable_S} as a necessary condition for the inhomogeneous estimates for the Schr\"odinger equation. The analogous condition for the wave equation seems not to have been established before, so we provide a proof in Section \ref{section:necessary} that $\frac{1}{q} < (d-1)(\frac{1}{2}-\frac{1}{r})$ is necessary in the case of inhomogeneous estimates for the wave equation. 

In our main result below, we consider the critical case $\frac{1}{q} = 2\sigma(\frac{1}{2}-\frac{1}{r})$ and obtain a generalisation of the weak-type estimate \eqref{e:Lseo} to an abstract setting analogous to the framework used by Taggart \cite{Tag}. As an application, we establish such weak-type estimates for the wave equation in the critical case $\frac{1}{q} = (d-1)(\frac{1}{2}-\frac{1}{r})$.

We now state the main result in this note. 
\begin{theorem}\label{t:qweak}
Let $\sigma>0$, $\mathcal{H}$ be a Hilbert space and $\mathcal{B}_0, \mathcal{B}_1$ be Banach spaces. For each $t\in \R$, suppose we have an operator $U(t): \mathcal{H} \rightarrow \mathcal{B}_0^*$ which satisfies the following estimates
\begin{align}
	\|U(t)f\|_{\mathcal{B}_0^*}&\lesssim\|f\|_{\mathcal{H}}, \label{e:energybanach}\\
		\|U(t)U^*(s)g\|_{\mathcal{B}_1^{\ast}}&\lesssim	|t-s|^{-\sigma}\|g\|_{\mathcal{B}_1}. \label{e:dispbanach}
\end{align}
Suppose one of the following conditions holds.
\begin{itemize}
	\item $\sigma<1$ and  $\theta,\tilde\theta\in[0,1]$;
	\item $\sigma=1$ and $\theta,\tilde\theta\in[0,1)$;
	\item $\sigma>1$ and
	\[ \frac{\sigma-1}{\sigma}(1-\theta)\le(1-\tilde\theta)	,\quad  \frac{\sigma-1}{\sigma}(1-\tilde\theta)\le(1-\theta)	.	\]
\end{itemize}
Then for all $q$ and $\tilde q$ verifying
\begin{equation}\label{e:qweakscale}
\sigma\theta=\frac{1}{q}<\frac{1}{\tilde q'}=1-\sigma\bigg(\frac{\tilde\theta}{2}-\frac{\theta}{2}\bigg),\quad \theta\le\tilde\theta,
\end{equation}
the following estimate
\begin{equation}\label{e:qweak}
\bigg\|\int_{s<t} U(t)U^*(s)F(s,\cdot)\, \d s	\bigg\|_{L_t^{q,\infty}\mathcal{B}^*_{\theta}} \lesssim \|	F	\|_{L_t^{\tilde{q}'}\mathcal{B}_{\tilde\theta}}
\end{equation}
holds, where $\mathcal{B}_{\theta}$ denotes the real interpolation space $(\mathcal{B}_0,\mathcal{B}_1)_{\theta,2}$. 
\end{theorem}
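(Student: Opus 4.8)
The plan is to cut the retarded propagator in \eqref{e:qweak} into dyadic temporal shells, prove for each shell a single-scale estimate whose operator norm scales by an explicit power of the dyadic parameter, and then assemble these into the weak-type bound by an elementary distribution-function argument.

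\emph{Step 1 (dyadic decomposition).} For $j\in\Z$ put
\[
T_jF(t):=\int_{\{\,2^j\le t-s<2^{j+1}\,\}}U(t)U^*(s)F(s)\,\d s ,
\]
so that the operator in \eqref{e:qweak} equals $\sum_{j\in\Z}T_j$. Write $\beta:=\tfrac12(\theta+\tilde\theta)$. Then \eqref{e:qweakscale} says precisely that $\frac1q=\sigma\theta$ and $\frac1q+\frac1{\tilde q}=\sigma\beta$, and the strict inequality $\frac1q<\frac1{\tilde q'}$ in \eqref{e:qweakscale} is equivalent to $\sigma\beta<1$, which together with $\theta\le\tilde\theta$ forces $\frac1q=\sigma\theta\le\sigma\beta<1$; in particular $q>1$.

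\emph{Step 2 (single-scale estimates).} This step is where the hypotheses on $(\sigma,\theta,\tilde\theta)$ and the known abstract machinery are used. Interpolating \eqref{e:energybanach} --- in the bilinear form
$|\langle U(t)U^*(s)g,h\rangle|=|\langle U^*(s)g,U^*(t)h\rangle_{\mathcal H}|\lesssim\|g\|_{\mathcal B_0}\|h\|_{\mathcal B_0}$,
obtained from Cauchy--Schwarz in $\mathcal H$ --- against \eqref{e:dispbanach}, by real interpolation with second index $2$ in each of the two slots and restricting the time integration to a single shell, one obtains, exactly as in the abstract bilinear arguments of Foschi \cite{Fos} and their Banach-space refinement by Taggart \cite{Tag}, the single-scale bound
\[
\bigl\|T_j\bigr\|_{L^{\tilde q'}_t\mathcal B_{\tilde\theta}\to L^{p}_t\mathcal B^{*}_{\theta}}\;\lesssim\;2^{\,j\left(\frac1p+\frac1{\tilde q}-\sigma\beta\right)},
\]
with implicit constant independent of $j$, for all $p$ in a neighbourhood of $q$. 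The three conditions on $(\sigma,\theta,\tilde\theta)$ are precisely those under which this borderline ($L^2$-type) bilinear interpolation is legitimate for the parameters $(\theta,\tilde\theta)$, and establishing this bound is the technical heart of the matter; everything else below is soft. On the critical line $\frac1q+\frac1{\tilde q}=\sigma\beta$ the exponent of $2^j$ vanishes at $p=q$; since $\frac1q<\frac1{\tilde q'}$, we may fix $p_{+},p_{-}$ near $q$ with $\frac1{p_{+}}<\frac1q<\frac1{p_{-}}<\frac1{\tilde q'}$, and set $\kappa_{\pm}:=\frac1{p_{\pm}}-\frac1q$, so that $\kappa_{-}>0>\kappa_{+}$ and
\[
\bigl\|T_j\bigr\|_{L^{\tilde q'}_t\mathcal B_{\tilde\theta}\to L^{p_{-}}_t\mathcal B^{*}_{\theta}}\lesssim 2^{\,j\kappa_{-}},\qquad
\bigl\|T_j\bigr\|_{L^{\tilde q'}_t\mathcal B_{\tilde\theta}\to L^{p_{+}}_t\mathcal B^{*}_{\theta}}\lesssim 2^{\,j\kappa_{+}}.
\]

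\emph{Step 3 (weak-type synthesis).} Normalise $\|F\|_{L^{\tilde q'}_t\mathcal B_{\tilde\theta}}=1$, fix $\lambda>0$, and choose $N\in\Z$ with $2^{N}\sim\lambda^{-q}$. Splitting $\sum_jT_jF=\sum_{j<N}T_jF+\sum_{j\ge N}T_jF$, summing the two convergent geometric series, and using the triangle inequality in $L^{p_{\pm}}_t\mathcal B^{*}_{\theta}$ (valid because $p_{\pm}>1$) gives
$\bigl\|\sum_{j<N}T_jF\bigr\|_{L^{p_{-}}_t\mathcal B^{*}_{\theta}}\lesssim 2^{N\kappa_{-}}$ and $\bigl\|\sum_{j\ge N}T_jF\bigr\|_{L^{p_{+}}_t\mathcal B^{*}_{\theta}}\lesssim 2^{N\kappa_{+}}$. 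Hence, by Chebyshev's inequality,
\[
\Bigl|\bigl\{\,t:\,\bigl\|{\textstyle\sum_jT_jF(t)}\bigr\|_{\mathcal B^{*}_{\theta}}>\lambda\,\bigr\}\Bigr|
\;\lesssim\;\lambda^{-p_{-}}2^{Np_{-}\kappa_{-}}+\lambda^{-p_{+}}2^{Np_{+}\kappa_{+}}
\;\lesssim\;\lambda^{-q},
\]
the last step being the arithmetic identity $\lambda^{-p_{\pm}}2^{Np_{\pm}\kappa_{\pm}}=\lambda^{-p_{\pm}(1+q\kappa_{\pm})}=\lambda^{-q}$, which holds because $2^{N}\sim\lambda^{-q}$ and $1+q\kappa_{\pm}=q/p_{\pm}$. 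This is exactly $\|\sum_jT_jF\|_{L^{q,\infty}_t\mathcal B^{*}_{\theta}}\lesssim\|F\|_{L^{\tilde q'}_t\mathcal B_{\tilde\theta}}$. Since $\sum_{j\in\Z}\mathbf{1}_{\{2^j\le t-s<2^{j+1}\}}=\mathbf{1}_{\{s<t\}}$, a routine density argument (the functions $F$ continuous, compactly supported in time, with values in $\mathcal B_0\cap\mathcal B_1$ are dense in $L^{\tilde q'}_t\mathcal B_{\tilde\theta}$ since $\tilde q'<\infty$) identifies $\sum_jT_jF$ with the operator in \eqref{e:qweak}, which finishes the proof. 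The one genuinely delicate point is the single-scale estimate of Step 2, which I would expect to be the main obstacle; the dyadic decomposition and the weak-type synthesis are routine.
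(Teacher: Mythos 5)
Your proposal is correct and follows essentially the same route as the paper: dyadic decomposition of the retarded propagator, Taggart's single-scale estimates (the paper's Lemma \ref{l:localbanach}, cited from \cite{Tag}) applied with temporal exponents perturbed on either side of $q$ using the strict inequality $\frac1q<\frac1{\tilde q'}$, and then summation over the scales. The only cosmetic difference is that your Step 3 reproves Bourgain's summation lemma inline (with the explicit choice $2^N\sim\lambda^{-q}$), whereas the paper quotes it as a separate lemma (Lemma \ref{l:btrick}) and optimises over $N$ at the end.
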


Observe that by taking $\mathcal{H} = L^2$, $(\mathcal{B}_0, \mathcal{B}_1) = (L^2, L^1)$, and $U(t)=e^{it\Delta}$, along with the Lorentz space embedding $L^{r'}\subset \mathcal{B}_{\theta}=L^{r',2}$ ($r = \frac{2}{1-\theta}$), then we recover the result in \cite{LSeo} with the addition of the case $d=2.$ 
\begin{corollary}\label{c:weakS}
Suppose one of the following conditions holds.
	\begin{itemize}
		\item $d=1$ and  $r,\tilde r\in(2,\infty]$;
		\item $d=2$ and $r,\tilde r\in(2,\infty)$;
		\item $d\ge 3$ and
		\[ \frac{d-2}{r}\le\frac{d}{\tilde r}	,\quad \frac{d-2}{\tilde r}\le\frac{d}{ r}	.	\]
	\end{itemize} Then for all $q$ and $\tilde q$ verifying
	\[d\bigg(\frac{1}{2}-\frac{1}{r}\bigg)=\frac{1}{q}<\frac{1}{\tilde q'}=1-\frac{d}{2}\bigg(\frac{1}{r}-\frac{1}{\tilde r}\bigg),\quad \frac{1}{\tilde r}\le\frac{1}{r},
	\]
	the following estimate
	\begin{equation}
\bigg\|	\int_{s<t} e^{i(t-s)\Delta}F(s,\cdot) \, \d s	\bigg\|_{L_t^{q,\infty}L^r_x}\lesssim \|F\|_{L_t^{\tilde q'}L_x^{\tilde r'}}
\end{equation}
	holds.
\end{corollary}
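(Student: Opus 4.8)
The strategy is to obtain Corollary \ref{c:weakS} as a direct specialisation of Theorem \ref{t:qweak} to the Schr\"odinger propagator, after which only bookkeeping with interpolation and Lorentz spaces remains. Concretely, I would apply Theorem \ref{t:qweak} with $\mathcal{H}=L^2(\mathbb{R}^d)$, with $U(t)=e^{it\Delta}$, and with the Banach couple $(\mathcal{B}_0,\mathcal{B}_1)=(L^2(\mathbb{R}^d),L^1(\mathbb{R}^d))$, so that $\mathcal{B}_0^\ast=L^2(\mathbb{R}^d)$ and $\mathcal{B}_1^\ast=L^\infty(\mathbb{R}^d)$. Since $e^{it\Delta}$ is unitary on $L^2$, its adjoint is $U^\ast(s)=e^{-is\Delta}$, and hence $U(t)U^\ast(s)=e^{i(t-s)\Delta}$, which is exactly the integrand appearing in the corollary.

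The first step is to check the two hypotheses of Theorem \ref{t:qweak}. Estimate \eqref{e:energybanach} holds (with equality) by the $L^2$-isometry property of the Schr\"odinger group, $\|e^{it\Delta}f\|_{L^2}=\|f\|_{L^2}$. Estimate \eqref{e:dispbanach} is the classical dispersive bound $\|e^{i(t-s)\Delta}g\|_{L^\infty(\mathbb{R}^d)}\lesssim|t-s|^{-d/2}\|g\|_{L^1(\mathbb{R}^d)}$, which, matched against \eqref{e:dispbanach}, fixes the parameter as $\sigma=\tfrac d2$.

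The second step is to identify the interpolation spaces and translate every hypothesis and conclusion of Theorem \ref{t:qweak}. By the standard description of the real interpolation spaces of a Lebesgue couple one has $\mathcal{B}_\theta=(L^2,L^1)_{\theta,2}=L^{r',2}(\mathbb{R}^d)$ with $\tfrac{1}{r'}=\tfrac{1+\theta}{2}$, equivalently $r=\tfrac{2}{1-\theta}$ (so $r\ge2$, the endpoint $r=\infty$ corresponding to $\theta=1$), and dually $\mathcal{B}_\theta^\ast=(L^2,L^\infty)_{\theta,2}=L^{r,2}(\mathbb{R}^d)$. Substituting $\sigma=\tfrac d2$ and $1-\theta=\tfrac2r$, $1-\tilde\theta=\tfrac2{\tilde r}$ into \eqref{e:qweakscale} turns the scaling and ordering relations into
\[
d\Big(\tfrac12-\tfrac1r\Big)=\tfrac1q<\tfrac1{\tilde q'}=1-\tfrac d2\Big(\tfrac1r-\tfrac1{\tilde r}\Big),\qquad \tfrac1{\tilde r}\le\tfrac1r,
\]
which are precisely the relations in the corollary; and since $\tfrac{\sigma-1}{\sigma}=\tfrac{d-2}{d}$, the three bulleted alternatives of Theorem \ref{t:qweak} (according as $\sigma<1$, $\sigma=1$, $\sigma>1$) become exactly the cases $d=1$, $d=2$, $d\ge3$ of the corollary, the exclusion $r=2$ (i.e.\ $\theta=0$) being forced by the strict inequality in \eqref{e:qweakscale}.

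The final step is to pass from \eqref{e:qweak} to the stated inequality by two elementary Lorentz embeddings in the spatial variable: since $r\ge2$ we have $L^{r,2}(\mathbb{R}^d)\hookrightarrow L^r(\mathbb{R}^d)$, so the spatial norm on the left of \eqref{e:qweak} controls $\|\cdot\|_{L^r_x}$; and since $\tilde r\ge2$ we have $L^{\tilde r'}(\mathbb{R}^d)\hookrightarrow L^{\tilde r',2}(\mathbb{R}^d)=\mathcal{B}_{\tilde\theta}$, so the spatial norm on the right of \eqref{e:qweak} is controlled by $\|\cdot\|_{L^{\tilde r'}_x}$. Combining these with \eqref{e:qweak} yields the corollary. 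The only point requiring a little extra care is the endpoint $r$ or $\tilde r=\infty$, which occurs only for $d=1$: there one interprets the degenerate interpolation space as $L^\infty$, consistently with how the proof of Theorem \ref{t:qweak} handles $\theta=1$ when $\sigma<1$. Beyond this I do not anticipate a genuine obstacle, since all of the analytic content is contained in Theorem \ref{t:qweak} and the remaining work is purely the identification of the spaces and exponents described above.
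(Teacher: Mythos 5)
Your proposal is correct and is essentially identical to the paper's own (very brief) justification: the paper likewise obtains Corollary \ref{c:weakS} by applying Theorem \ref{t:qweak} with $\mathcal{H}=L^2$, $(\mathcal{B}_0,\mathcal{B}_1)=(L^2,L^1)$, $U(t)=e^{it\Delta}$, $\sigma=\tfrac d2$, identifying $\mathcal{B}_\theta=(L^2,L^1)_{\theta,2}=L^{r',2}$ with $r=\tfrac{2}{1-\theta}$, and using the Lorentz embeddings $L^{\tilde r'}\subset L^{\tilde r',2}$ and $L^{r,2}\subset L^r$. All of your exponent translations check out, so there is nothing further to add.
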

By duality, Theorem \ref{t:qweak} also obtains estimates for the cases $q=\infty$ or $\tilde q=\infty$ which are excluded in the definition of $\sigma$-acceptable exponents used in the statements of previous results such as \cite{CazWei,TKat,Fos,Vil,Koh,KohSeo}. Indeed, take $0\le \theta=\tilde \theta< 1/\sigma$ in \eqref{e:qweakscale} with $\tilde q=\infty$, in which case we obtain
\begin{align}
\bigg\|\int_{s<t} U(t)U^*(s)F(s,\cdot)\, \d s	\bigg\|_{L_t^{q,\infty}\mathcal{B}^*_{\theta}}  \lesssim \|	F	\|_{L_t^{1}\mathcal{B}_{\theta}} \\
\bigg\|\int_{s<t} U(t)U^*(s)F(s,\cdot)\, \d s	\bigg\|_{L_t^{\infty}\mathcal{B}^*_{\theta}}  \lesssim \|	F	\|_{L_t^{q',1}\mathcal{B}_{\theta}} 
\end{align}
for $q > 1$. 

As discussed already, the level of generality of Theorem \ref{t:qweak} accommodates dispersive estimates in terms of Besov spaces and thus allows one to avoid use of Littlewood--Paley theory to extend frequency-localised estimates to frequency-global estimates. A direct application of Theorem \ref{t:qweak} to the wave equation is the following.
\begin{corollary}\label{c:weakwave} Suppose  one of the following conditions holds.
	\begin{itemize}
		\item $d=2$ and  $r,\tilde r\in(2,\infty]$;
		\item $d=3$ and $r,\tilde r\in(2,\infty)$;
		\item $d\ge 4$ and
		\[ \frac{d-3}{r}\le\frac{d-1}{\tilde r}	,\quad \frac{d-3}{\tilde r}\le\frac{d-1}{ r}	.	\]
	\end{itemize} Then for all $q$ and $\tilde q$ verifying
	\[(d-1)\bigg(\frac{1}{2}-\frac{1}{r}\bigg)=\frac{1}{q}<\frac{1}{\tilde q'}=1-\frac{d-1}{2}\bigg(\frac{1}{r}-\frac{1}{\tilde r}\bigg),
\]	and
	\[\gamma=\frac{d+1}{2}\bigg(\frac{1}{2}-\frac{1}{ r}\bigg)\quad \tilde \gamma=\frac{d+1}{2}\bigg(\frac{1}{2}-\frac{1}{ \tilde r}\bigg) .
\]
	the following estimate
	\begin{equation}
	\bigg\|	\int_{s<t} e^{i(t-s)\sqrt{-\Delta}}F(s,\cdot)\, \d s	\bigg\|_{L_t^{q,\infty}\dot{B}_{r,2}^{-\gamma} 	} \lesssim \|F\|_{L^{\tilde q'}\dot{B}_{\tilde r',2}^{\tilde\gamma} } \label{e:w1}
	\end{equation}
	holds. Therefore, 
	\begin{equation}
\bigg\|	\int_{s<t} e^{i(t-s)\sqrt{-\Delta}}F(s,\cdot)\, \d s	\bigg\|_{L^{q,\infty}_tL^r_x}\lesssim \|(\sqrt{-\Delta})^{\gamma+\tilde \gamma}F\|_{L^{\tilde q'}_tL^{\tilde r'}_x} \label{e:w2}
	\end{equation}
	also holds whenever $r,\tilde r<\infty$ .
\end{corollary}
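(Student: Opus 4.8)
The plan is to deduce everything from Theorem~\ref{t:qweak} applied with $\mathcal{H}=L^2(\R^d)$, $\mathcal{B}_0=L^2(\R^d)$, $\mathcal{B}_1=\dot B^{\rho}_{1,2}(\R^d)$ where $\rho=\frac{d+1}{4}$, $U(t)=e^{it\sqrt{-\Delta}}$, and $\sigma=\frac{d-1}{2}$. Since $\mathcal{B}_0^*=L^2$ and $e^{it\sqrt{-\Delta}}$ is unitary on $L^2$, the energy estimate \eqref{e:energybanach} is immediate; unitarity also gives $U^*(s)=e^{-is\sqrt{-\Delta}}$, hence $U(t)U^*(s)=e^{i(t-s)\sqrt{-\Delta}}$, and since $\mathcal{B}_1^*=(\dot B^{\rho}_{1,2})^*=\dot B^{-\rho}_{\infty,2}$ the dispersive estimate \eqref{e:dispbanach} is exactly the Besov-refined estimate \eqref{e:dispersiveW_Besov} of Taggart \cite{Tag}. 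Introducing $\theta=1-\tfrac2r$ and $\tilde\theta=1-\tfrac2{\tilde r}$, so that $\tfrac12-\tfrac1r=\tfrac\theta2$, $\tfrac12-\tfrac1{\tilde r}=\tfrac{\tilde\theta}2$, $1-\theta=\tfrac2r$, $1-\tilde\theta=\tfrac2{\tilde r}$, and using $\frac{\sigma-1}{\sigma}=\frac{d-3}{d-1}$, one checks directly that the trichotomy $\sigma<1$, $\sigma=1$, $\sigma>1$ together with the two inequalities in the case $\sigma>1$ translate verbatim into the $d=2$, $d=3$, $d\ge4$ conditions in the statement, and that the scaling relation \eqref{e:qweakscale} becomes $(d-1)(\tfrac12-\tfrac1r)=\tfrac1q<\tfrac1{\tilde q'}=1-\tfrac{d-1}{2}(\tfrac1r-\tfrac1{\tilde r})$ with $\tfrac1{\tilde r}\le\tfrac1r$. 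Theorem~\ref{t:qweak} therefore produces \eqref{e:qweak} with $\mathcal{B}_\theta=(L^2,\dot B^{\rho}_{1,2})_{\theta,2}$ and $\mathcal{B}_{\tilde\theta}=(L^2,\dot B^{\rho}_{1,2})_{\tilde\theta,2}$.

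Next I would pass from \eqref{e:qweak} to \eqref{e:w1} by identifying these interpolation spaces. Writing $L^2=\dot B^0_{2,2}$ and using the interpolation formula for Besov spaces with distinct smoothness indices (via the standard Littlewood--Paley retract, interpolating the Lebesgue components in the integrability index), $(L^2,\dot B^{\rho}_{1,2})_{\tilde\theta,2}$ is, up to equivalence of norms, the Besov space of smoothness $\tilde\gamma=\rho\tilde\theta=\frac{d+1}{2}(\tfrac12-\tfrac1{\tilde r})$ and micro-index $2$ built over the Lorentz space $L^{\tilde r',2}$; since $\tilde r'\le2$ one has $L^{\tilde r'}\hookrightarrow L^{\tilde r',2}$ and hence $\dot B^{\tilde\gamma}_{\tilde r',2}\hookrightarrow\mathcal{B}_{\tilde\theta}$. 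Dually, $\mathcal{B}_\theta^*=(L^2,\dot B^{-\rho}_{\infty,2})_{\theta,2}$ is the Besov space of smoothness $-\gamma$, $\gamma=\rho\theta=\frac{d+1}{2}(\tfrac12-\tfrac1r)$, micro-index $2$, over $L^{r,2}$, and since $r\ge2$ the embedding $L^{r,2}\hookrightarrow L^{r}$ gives $\mathcal{B}_\theta^*\hookrightarrow\dot B^{-\gamma}_{r,2}$. Both embeddings point in the direction needed to insert them into \eqref{e:qweak} (the first on the right-hand side, the second on the left), which yields \eqref{e:w1}.

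Finally, \eqref{e:w2} follows from \eqref{e:w1} by commuting the Fourier multiplier $(\sqrt{-\Delta})^{\gamma}$ through the operator $F\mapsto\int_{s<t}e^{i(t-s)\sqrt{-\Delta}}F(s,\cdot)\,\d s$ --- it commutes with $e^{i(t-s)\sqrt{-\Delta}}$ and with the $s$-integration --- and then using the homogeneous lifting isomorphism $(\sqrt{-\Delta})^{a}\colon\dot B^{s}_{p,2}\to\dot B^{s-a}_{p,2}$ together with the Besov--Lebesgue embeddings $\dot B^{0}_{r,2}\hookrightarrow L^{r}$, valid for $2\le r<\infty$, on the left and $L^{\tilde r'}\hookrightarrow\dot B^{0}_{\tilde r',2}$, valid for $1<\tilde r'\le2$, on the right. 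The hypothesis $r,\tilde r<\infty$ is precisely what makes these two embeddings available: at $r=\infty$ one would need $L^{\infty}$ and at $\tilde r=\infty$ one would need $L^{1}\hookrightarrow\dot B^0_{1,2}$, both of which are false.

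The substantive analytic work is contained in Theorem~\ref{t:qweak}, so what remains is essentially bookkeeping; the point demanding the most care is the identification of the real interpolation spaces $(L^2,\dot B^{\rho}_{1,2})_{\theta,2}$ with the differing integrability indices $1$ and $2$ --- the interpolated space naturally lives over a Lorentz space rather than a Lebesgue space, which is why only one-sided embeddings into or out of the plain Besov spaces are available --- and checking that the endpoint $\theta=1$ occurring when $d=2$, $r=\infty$ causes no difficulty for the conclusion \eqref{e:w1}.
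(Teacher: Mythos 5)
Your proposal is correct and follows essentially the same route as the paper: apply Theorem~\ref{t:qweak} with $(\mathcal{B}_0,\mathcal{B}_1)=(L^2,\dot B^{\rho}_{1,2})$, $\rho=\frac{d+1}{4}$, using the Besov-space dispersive estimate \eqref{e:dispersiveW_Besov}, identify $(\mathcal{B}_0,\mathcal{B}_1)_{\theta,2}$ as a Besov space built over the Lorentz space $L^{r',2}$ and use the one-sided embeddings into the plain Besov spaces to get \eqref{e:w1}, then pass to \eqref{e:w2} via the lifting isomorphism and the Besov--Sobolev embeddings for $2\le r,\tilde r<\infty$. The parameter bookkeeping ($\theta=1-\tfrac2r$, the trichotomy in $d$, the scaling line) matches the paper's, so no further comment is needed.
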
	
In particular, take $\gamma+\tilde\gamma=1,q=r$ and $\tilde q=\tilde r$, then Corollary \ref{c:weakwave} states that if $d\ge2,$
\[\frac{1}{\tilde q'}-\frac{1}{q}=\frac{2}{d+1}
\]
and
\[ 
\frac{d-1}{2d}=\frac{1}{q}<\frac{1}{\tilde q'}=\frac{2}{d+1}-\frac{d-1}{2d},
\]
then 
\begin{equation}\label{e:w3}
\| u\|_{L^{q,\infty}_{t}L^q_x}\lesssim\|F\|_{L^{\tilde q'}_{t,x}}
\end{equation}
holds for any solution $u$ to $(\mathrm{W})$ with zero initial data.

Note that \eqref{e:w2} follows from \eqref{e:w1} by the embeddings
\[
\dot{B}_{r,2}^{\rho}\subseteq \dot{H}^{\rho}_r  \quad\quad \mathrm{and} \quad \quad \dot{H}^{\rho}_{r'} \subseteq \dot{B}_{r',2}^{\rho} 
\]
for $2\le r<\infty$ and $\rho\in \R$, and the fact that $(\sqrt{-\Delta})^{\alpha}$ is an isomorphism from $\dot{B}^{\gamma}_{r,2}$ to $\dot{B}_{r,2}^{\gamma-\alpha}$. Here, $\dot{H}^\rho_r$ denotes the homogeneous Sobolev space whose norm is defined by $		\|	f\|_{\dot{H}^\rho_r} =\|(\sqrt{-\Delta})^{\rho}f	\|_{L^p}$.

For \eqref{e:w2}, we note that $L^{q,\infty}$ cannot be improved to a smaller Lorentz space $L^{q,p}$ with $p < \infty$. We prove this assertion in Section \ref{section:necessary} (the corresponding claim for the Schr\"odinger equation and \eqref{e:Lseo} was explained in \cite{LSeo}). For further remarks on Theorem \ref{t:qweak} and its connection to other results in the literature, we refer the reader to Section \ref{section:necessary}.


\section{Proofs of the main results} \label{section:proofs}
The following time-local estimates were obtained in \cite[Proposition 4.1]{Tag} by Taggart and were used to generalise the abstract result of Foschi \cite{Fos} from $L^r_X$ spaces to an interpolation family of abstract Banach spaces $\mathcal{B}_{\theta}=(\mathcal{B}_0,\mathcal{B}_1)_{\theta,2}$; we refer the reader to \cite{BerghLofstrom} for details of real interpolation spaces. As noted above, we will need this level of generality to be able to prove \eqref{e:w2} by avoiding Littlewood--Paley theory and thus handle the cases where $q < 2$.
\begin{lemma} \cite[Proposition 4.1]{Tag} \label{l:localbanach}
	Let $\sigma>0$ and assume $U(t)$ satisfies \eqref{e:energybanach} and \eqref{e:dispbanach}. Suppose one of the following conditions holds.
	\begin{itemize}
		\item $\sigma<1$ and  $\theta,\tilde\theta\in[0,1]$;
		\item $\sigma=1$ and $\theta,\tilde\theta\in[0,1)$;
		\item $\sigma>1$ and
		\[ \frac{\sigma-1}{\sigma}(1-\theta)\le(1-\tilde\theta)	,\quad  \frac{\sigma-1}{\sigma}(1-\tilde\theta)\le(1-\theta)	.	\]
	\end{itemize}
	Suppose $q$ and $\tilde q$ verify either
\[0\le\frac{1}{q}\le\frac{1}{\tilde q'}\le1-\sigma\bigg(\frac{\tilde\theta}{2}-\frac{\theta}{2}\bigg),\quad \theta\le\tilde\theta,
\]
or
\[-\sigma\bigg(\frac{\tilde\theta}{2}-\frac{\theta}{2}\bigg)\le\frac{1}{q}\le\frac{1}{\tilde q'}\le1,\quad \tilde\theta\le\theta.
\]
Then, for all $j \in \mathbb{Z}$, the estimate
	\begin{equation}\label{e:timedelaybanach}
\bigg	\|	\int_{|t-s|\sim 2^j} U(t)U^*(s)F(s,\cdot)\, \d s	\bigg\|_{L^{q}_t\mathcal{B}^*_{\theta}} \lesssim 2^{-j\beta(\theta,\tilde{\theta},q,\tilde{q})}\|	F	\|_{L^{\tilde q'}_t\mathcal{B}_{\tilde \theta}}
	\end{equation}
	holds, where
	\begin{equation}
	\beta(\theta,\tilde{\theta},q,\tilde{q})=\frac{\sigma}{2}(\theta+\tilde\theta)-\frac{1}{\tilde{q}}-\frac{1}{q}.
	\end{equation}
\end{lemma}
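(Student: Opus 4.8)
The plan is to dualise and reduce \eqref{e:timedelaybanach} to a time-localised bilinear estimate that is then obtained by multilinear real interpolation, following the scheme of Keel--Tao \cite{KTao} and its Banach-space adaptation in \cite{Tag}. Testing the left-hand side against $G\in L^{q'}_t\mathcal{B}_{\theta}$ — legitimate because the norm of $\mathcal{B}_{\theta}^\ast$ is realised by pairing with $\mathcal{B}_{\theta}$, and likewise for the mixed norms (with the usual care when $q=1$) — the estimate is equivalent to
\[
|T_j(F,G)|\lesssim 2^{-j\beta}\|F\|_{L^{\tilde q'}_t\mathcal{B}_{\tilde\theta}}\|G\|_{L^{q'}_t\mathcal{B}_{\theta}},\qquad T_j(F,G):=\iint_{|t-s|\sim 2^j}\langle U^*(s)F(s),\,U^*(t)G(t)\rangle_{\mathcal{H}}\,\d s\,\d t .
\]
Partition $\R$ into intervals $\{I_k\}$ of length $\sim 2^j$; since $|t-s|\sim 2^j$ confines $(s,t)$ to a bounded band of pairs $I_{k'}\times I_k$, one has $T_j=\sum_{|k-k'|\lesssim 1}T_{k,k'}$, where $T_{k,k'}$ is the same integral over $I_{k'}\times I_k$. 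Because the band has bounded width, reassembling the numbers $|T_{k,k'}|$ into the bound for $|T_j|$ is a H\"older inequality in the index $k$, and this is exactly where the hypothesis $\tfrac1q\le\tfrac1{\tilde q'}$ (equivalently $q'\le\tilde q$) is used. It thus suffices to prove the local estimate $|T_{k,k'}(F,G)|\lesssim 2^{-j\beta}\|F\|_{L^{\tilde q'}(I_{k'})\mathcal{B}_{\tilde\theta}}\|G\|_{L^{q'}(I_k)\mathcal{B}_{\theta}}$ uniformly in $k,k'$.

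For the local estimate I would assemble a finite family of ``corner'' bounds from three ingredients: (i) the energy estimate \eqref{e:energybanach} and its dual $\|U^*(r)\|_{\mathcal{B}_0\to\mathcal{H}}\lesssim 1$; (ii) the dispersive estimate \eqref{e:dispbanach}, which on the band reads $\|U(t)U^*(s)\|_{\mathcal{B}_1\to\mathcal{B}_1^\ast}\lesssim 2^{-j\sigma}$; and (iii) the homogeneous Strichartz estimate in the Banach setting, $\|U(t)h\|_{L^{a}_t\mathcal{B}^\ast_{\vartheta}}\lesssim\|h\|_{\mathcal{H}}$ for sharp $\sigma$-admissible $(a,\vartheta)$ (that is, $\tfrac1a=\tfrac{\sigma\vartheta}{2}\le\tfrac12$, away from the forbidden endpoint) — the Keel--Tao theorem in the form used in \cite{Tag} — together with a H\"older loss of $2^{j(\frac1a-\frac{\sigma\vartheta}{2})}$ on the interval $I_k$ in the sub-admissible range. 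Applying Cauchy--Schwarz in $\mathcal{H}$ and bounding each of the two factors $\|\int_I U^*(r)H(r)\,\d r\|_{\mathcal{H}}$ ($H=F$ or $G$) by the dual of (i) or (iii) produces: an \emph{energy corner} at $(\theta,\tilde\theta)=(0,0)$; \emph{Strichartz bounds} living on the slabs $\theta=0$ or $\tilde\theta=0$ (valid where the corresponding temporal exponent is admissible); and a \emph{double-Strichartz bound} valid for all $(\theta,\tilde\theta)$ with $\tfrac1q,\tfrac1{\tilde q}$ small enough. Retaining the band constraint and invoking (ii) directly gives a \emph{dispersive corner} at $(\theta,\tilde\theta)=(1,1)$. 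In every case the power of $2^j$ produced equals $-\beta$ evaluated at that point, and $\beta$ is affine in $(\theta,\tilde\theta,\tfrac1q,\tfrac1{\tilde q})$.

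I would then cover the full parameter range by the bilinear real interpolation lemma of Keel--Tao (as used by Foschi \cite{Fos} and Taggart \cite{Tag}) applied to $T_{k,k'}$; affinity of $\beta$ makes the interpolated exponent automatic. The real content, and the step I expect to be the main obstacle, is the bookkeeping needed to see that the stated hypotheses place $(\theta,\tilde\theta,\tfrac1q,\tfrac1{\tilde q'})$ in the convex hull of the bounds above. The dichotomy $\theta\le\tilde\theta$ versus $\tilde\theta\le\theta$ selects which of the two mirror Strichartz slabs is used; the chain $0\le\tfrac1q\le\tfrac1{\tilde q'}\le 1-\sigma(\tfrac{\tilde\theta}{2}-\tfrac{\theta}{2})$ and its counterpart describe precisely the faces of that hull; and the trichotomy in $\sigma$ comes from ingredient (iii), since $\tfrac1a\le\tfrac12$ forces $\vartheta\le\tfrac1\sigma$ — hence for $\sigma<1$ the Strichartz slabs reach $\vartheta=1$, for $\sigma=1$ they reach every $\vartheta<1$, and for $\sigma>1$ only $\vartheta\le\tfrac1\sigma$, in which last case the triangle with vertices $(0,0)$, $(0,\tfrac1\sigma)$, $(1,1)$ (the remaining available region, using also the dispersive corner) is exactly $\{\,\tfrac{\sigma-1}{\sigma}(1-\theta)\le1-\tilde\theta,\ \tfrac{\sigma-1}{\sigma}(1-\tilde\theta)\le1-\theta\,\}$. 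The residual technicalities — measurability and density in the abstract setting, and the identification $(\mathcal{B}_0,\mathcal{B}_1)^\ast_{\theta,2}=(\mathcal{B}_0^\ast,\mathcal{B}_1^\ast)_{\theta,2}$ under the standing assumptions on the couple — are handled as in \cite{Tag,BerghLofstrom} and are routine by comparison with the above.
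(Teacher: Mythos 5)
The paper gives no proof of this lemma at all --- it is quoted directly from Taggart \cite[Proposition 4.1]{Tag} --- and your outline reproduces exactly the strategy of that cited proof (dualisation to a bilinear form, decomposition into intervals of length $\sim 2^j$ with H\"older in the block index giving the constraint $\tfrac1q\le\tfrac1{\tilde q'}$, corner/slab estimates from the energy, dispersive and abstract homogeneous Strichartz bounds, and bilinear real interpolation), so the architecture is correct and matches the source. Do note, however, that the step you defer as ``bookkeeping'' --- verifying that the stated hypotheses on $(\theta,\tilde\theta,q,\tilde q)$ are exactly the convex hull of those corner and slab estimates --- together with the routine handling of the band constraint $|t-s|\sim 2^j$ inside the Cauchy--Schwarz factorisation, is precisely the content of Taggart's proposition, so what you have is a faithful plan of the cited argument rather than a self-contained proof.
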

To prove Theorem \ref{t:qweak}, we will need the following summation lemma which allows us to sum \eqref{e:timedelaybanach} in $j\in\Z$ to obtain \eqref{e:qweak}.
\begin{lemma}\label{l:btrick}
	Let $\epsilon_1,\epsilon_2>0$ and $1\le q_1,q_2<\infty.$ For a given Banach space $\mathcal{B}$, suppose $\{f_j\}_{j\in\Z}	$ is a collection of $\mathcal{B}$-valued functions defined on $ \R $ such that 
	\begin{equation}\label{e:bounds}
	\|	f_j\|_{L^{q_1}(\mathcal{B}) }\le M_12^{\epsilon_1j} \text{ and }  \|	f_j\|_{L^{q_2}(\mathcal{B}) }\le M_22^{-\epsilon_2j}.
	\end{equation}
	Then
	\[ 
	\bigg \|\sum_j f_j \bigg\|_{L^{q,\infty}(\mathcal{B}) }\le CM_1^{\theta}M_2^{1-\theta}
	\]
	where $ \theta=\epsilon_2/(\epsilon_1+\epsilon_2) $, $1/q=\theta/q_1+(1-\theta)/q_2$ and $C$ is a constant depending on $\epsilon_1,\epsilon_2,q_1$ and $q_2.$
\end{lemma}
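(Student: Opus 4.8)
The plan is to prove this by a Bourgain-type dyadic summation argument: one estimates the distribution function of $g := \sum_j f_j$ directly, splitting the sum at a level $N$ that is chosen as a function of the height $\lambda$. Here, for a $\mathcal{B}$-valued function $h$ on $\R$, I write $d_h(\lambda) := |\{t \in \R : \|h(t)\|_{\mathcal{B}} > \lambda\}|$, so that $\|h\|_{L^{q,\infty}(\mathcal{B})}^q = \sup_{\lambda>0}\lambda^q d_h(\lambda)$.

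First I would normalise to the case $M_1 = M_2 = 1$. Replacing $f_j$ by $M_1^{-\theta} M_2^{-(1-\theta)} f_{j+a}$, where $a$ is the nearest integer to $(\epsilon_1+\epsilon_2)^{-1}\log_2(M_2/M_1)$, turns the hypotheses \eqref{e:bounds} into $\|f_j\|_{L^{q_1}(\mathcal{B})} \lesssim 2^{\epsilon_1 j}$ and $\|f_j\|_{L^{q_2}(\mathcal{B})} \lesssim 2^{-\epsilon_2 j}$ (the rounding of $a$ costing only a factor depending on $\epsilon_1,\epsilon_2$), while multiplying $\|g\|_{L^{q,\infty}(\mathcal{B})}$ by exactly $M_1^{-\theta} M_2^{-(1-\theta)}$; so it suffices to prove $\|g\|_{L^{q,\infty}(\mathcal{B})} \lesssim 1$ under these normalised bounds. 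Next, fix $\lambda > 0$ and an integer $N$ (still to be chosen) and split $g = \sum_{j \le N} f_j + \sum_{j > N} f_j$. Since $\epsilon_1,\epsilon_2 > 0$, summing the two geometric series gives $\|\sum_{j \le N} f_j\|_{L^{q_1}(\mathcal{B})} \lesssim 2^{\epsilon_1 N}$ and $\|\sum_{j > N} f_j\|_{L^{q_2}(\mathcal{B})} \lesssim 2^{-\epsilon_2 N}$, each series converging absolutely in the respective space (which incidentally shows $g$ is a well-defined element of $L^{q_1}(\mathcal{B}) + L^{q_2}(\mathcal{B})$, independently of $N$). Combining the elementary bound $d_{u+v}(\lambda) \le d_u(\lambda/2) + d_v(\lambda/2)$ with Chebyshev's inequality (this is where $q_1,q_2 < \infty$ is used) then yields
\[
d_g(\lambda) \;\lesssim\; \frac{2^{\epsilon_1 N q_1}}{\lambda^{q_1}} \;+\; \frac{2^{-\epsilon_2 N q_2}}{\lambda^{q_2}}.
\]

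The crucial step is to choose $N$ so as to balance these two terms, and this is exactly where the definitions of $\theta$ and $q$ enter. Taking $N$ to be the nearest integer to $N_*(\lambda) := \epsilon_1^{-1}(1 - q/q_1)\log_2 \lambda$ and invoking the identity $\epsilon_1^{-1}(1 - q/q_1) = \epsilon_2^{-1}(q/q_2 - 1)$ — which is precisely equivalent to $1/q = \theta/q_1 + (1-\theta)/q_2$ with $\theta = \epsilon_2/(\epsilon_1+\epsilon_2)$ — one checks that $2^{\epsilon_1 N_* q_1}\lambda^{-q_1} = \lambda^{-q}$ and $2^{-\epsilon_2 N_* q_2}\lambda^{-q_2} = \lambda^{-q}$ simultaneously, while rounding $N_*$ to an integer changes each term by at most a constant factor. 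Hence $\lambda^q d_g(\lambda) \lesssim 1$ uniformly in $\lambda > 0$, i.e. $\|g\|_{L^{q,\infty}(\mathcal{B})} \lesssim 1$, and undoing the normalisation produces the asserted estimate with $C = C(\epsilon_1,\epsilon_2,q_1,q_2)$.

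I do not anticipate a genuine obstacle here: the lemma is essentially the real-interpolation statement $(L^{q_1},L^{q_2})_{\theta,\infty}$ applied to a dyadically parametrised family of functions, and the only points requiring care are the bookkeeping in the normalisation (the possibly non-integer index shift $a$, handled by rounding) and the exponent algebra in the choice of $N$ described above. If one prefers, the normalisation can be avoided entirely at the cost of carrying $M_1,M_2$ through the computation, choosing $N$ near $\epsilon_1^{-1}\log_2(\lambda/(M_1^{q/q_1 - 1}M_2^{\cdots}))$; this is arithmetically messier but conceptually identical.
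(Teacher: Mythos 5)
Your proposal is correct and follows essentially the same route as the paper's proof: split $\sum_j f_j$ at a threshold $N$, apply Chebyshev's and the triangle inequality to bound the distribution function by $\lambda^{-q_1}2^{\epsilon_1 N q_1}M_1^{q_1}+\lambda^{-q_2}2^{-\epsilon_2 N q_2}M_2^{q_2}$ (up to constants), and optimise over $N$. Your normalisation $M_1=M_2=1$ and the explicit formula for $N_*(\lambda)$ are just a concrete way of carrying out the optimisation the paper leaves implicit; the exponent algebra checks out.
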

The above lemma is due to Bourgain \cite{Bourgain}; see also \cite{LSeo} or \cite{LSeo2}. For completeness, we provide a short proof.
\begin{proof}[Proof of Lemma \ref{l:btrick}]
Let $\lambda > 0$ and $N\in\mathbb{Z}$ which will be chosen at the end of the proof. Then, by Chebyshev's inequality and the triangle inequality, we have
	\[
	\bigg|\bigg\{t: \bigg\|\sum_{j=-\infty}^\infty f_j\bigg\|_{\mathcal{B}} >\lambda\bigg\}\bigg| \lesssim \lambda^{-q_1} I_1 + \lambda^{-q_2} I_2
	\]
	where
	\begin{align*}
	I_1 & := \int_\mathbb{R} \bigg(\sum_{j=-\infty}^N\|f_j(\cdot,t)\|_{\mathcal{B}}\bigg)^{q_1}\, \d t \\
	I_2 & := \int_\mathbb{R} \bigg(\sum_{j=N+1}^\infty \|f_j(\cdot,t)\|_{\mathcal{B}}\bigg)^{q_2}\, \d t.
	\end{align*}

	By the triangle inequality and \eqref{e:bounds}, we obtain
	$
	I_1^{1/q_1} \lesssim M_12^{\epsilon_1N} 
	$
	and
	$
	I_2^{1/q_2} \lesssim M_22^{-\epsilon_2N}.
	$
	Putting these bounds together and optimising in the choice of $N$ yields
	\[
	\bigg|\bigg\{t: \bigg\|\sum_{j=-\infty}^\infty f_j\bigg\|_{\mathcal{B}} >\lambda\bigg\}\bigg|^{1/q} \lesssim M_1^{\theta}M_2^{1-\theta}\lambda^{-1} 
	\]
	as claimed.
\end{proof}

\subsection{Proof of Theorem \ref{t:qweak}}

Define the operator $T_j$ by
\[T_jF=\int_{|t-s|\sim2^j}U(t)U^*(s)F(s,\cdot)\, \d s.
\]
Then,
\[\int^t_{-\infty}U(t)U^*(s)F(s,\cdot)\, \d s=\sum_{j\in\Z}T_jF.
\]
Fix $\theta,\tilde \theta,q,\tilde q$ satisfying \eqref{e:qweakscale}. Then we verify that $\frac{\sigma}{2}(\theta+\tilde\theta)-\frac{1}{\tilde{q}}-\frac{1}{q}=0$. Now we choose $q_1,q_2$ given by
\[	\frac{1}{q_1}=\frac{1}{q}+\delta,\quad \frac{1}{q_2}=\frac{1}{q}-\delta
\]
for some $\delta>0$ sufficiently small so that the conditions of Lemma \ref{l:localbanach} holds for pairs $(q_i,\theta)$ and $(\tilde q,\tilde\theta), i=1,2.$ Such $\delta$ exists since we assume $1/q<1/\tilde q'$. 
Thus, by  Lemma \ref{l:localbanach} we have
\[\|	T_jF	\|_{L^{q_i}_t\mathcal{B}^*_{\theta}}\lesssim 2^{j(\frac{\sigma}{2}(\theta+\tilde\theta)-\frac{1}{\tilde{q}}-\frac{1}{q_i})}	\|		F\|_{L^{\tilde q_i}_t\mathcal{B}_{\tilde\theta}}
\]and we note that
\[\frac{\sigma}{2}(\theta+\tilde\theta)-\frac{1}{\tilde{q}}-\frac{1}{q_1}<0<\frac{\sigma}{2}(\theta+\tilde\theta)-\frac{1}{\tilde{q}}-\frac{1}{q_2}.
\]
Applying Lemma \ref{l:btrick} with $\epsilon_i= |	\frac{\sigma}{2}(\theta+\tilde\theta)-\frac{1}{\tilde{q}}-\frac{1}{q_i}		|$ gives the desired estimate \eqref{e:qweak}. \qed

\subsection{Proof of Corollary \ref{c:weakwave}}
We make use of the dispersive estimate \eqref{e:dispersiveW_Besov}, which we restate here
\begin{equation} \label{e:dispersiveW_Besov_again}
\|	e^{it\sqrt{-\Delta}}	f\|_{\dot{B}^{-\rho}_{\infty,2}}\lesssim |t|^{-(d-1)/2}\|	f\|_{\dot{B}^{\rho}_{1,2}},
\end{equation}
where $\rho = \frac{d+1}{4}$. Here, we recall the Besov norm is given by
\[
\|f\|_{\dot{B}^\rho_{r,s}} = \bigg( \sum_{j \in \mathbb{Z}} 2^{\rho s j} \| f * \varphi_j \|_{L^r(\mathbb{R}^d)}^s \bigg)^{1/s}
\]
where $\widehat{\varphi}_j = \widehat{\varphi}(2^{-j} \cdot)$, $\widehat{\varphi}\in C^\infty_0[\frac{1}{2},2]$ and $\sum_{j\in\Z}\widehat{\varphi}_j =1$.

Although the estimate \eqref{e:dispersiveW_Besov_again} can be found in \cite{GiVe}, we give a proof for completeness. Given $\varphi$ as above, the standard dispersive estimate
\begin{equation} \label{e:dispersive_W_standard}
\sup_{x\in\R^d}\bigg|	\int_{\R^d}	e^{i(x\cdot\xi+t|\xi|)}\widehat{\varphi}(\xi)\, \d\xi		\bigg|\lesssim |t|^{-(d-1)/2}
\end{equation}
and an elementary rescaling argument gives
\[
\| e^{it\sqrt{-\Delta}}\varphi_j	\|_{L^\infty(\R^d)} \lesssim |t|^{-(d-1)/2}2^{j(d+1)/2}.
\]
This easily yields
\[
2^{-j(d+1)/4} \| \varphi_j * e^{it\sqrt{-\Delta}}f	\|_{L^\infty(\R^d)} \lesssim |t|^{-(d-1)/2} 2^{j(d+1)/4} \| \widetilde{\varphi}_j * f\|_{L^1(\R^d)}
\]
where $\widetilde{\varphi}$ is similar to $\varphi$, chosen such that $\varphi_j * \widetilde{\varphi}_j = \varphi_j$. Taking the $\ell^2$ norm of each sequence gives \eqref{e:dispersiveW_Besov_again}.

\begin{proof}[Proof of Corollary \ref{c:weakwave}]
For the operator $U(t) = e^{it\sqrt{-\Delta}}$, we apply Theorem \ref{t:qweak} with $\mathcal{H} = L^2$ and $(\mathcal{B}_0,\mathcal{B}_1) = (L^2,\dot{B}_{1,2}^{\rho})$, where $\rho = \frac{d+1}{4}$. The estimate \eqref{e:w1} now follows by making use of the interpolation space identity (see, for example, \cite[Lemma 8.1]{Tag})
\[
(\dot{B}^{\rho_0}_{r_0,2},\dot{B}^{\rho_1}_{r_1,2})_{\theta,2} = \dot{B}^\rho_{(r,2),2},
\]
where $r_0,r_1 \in [1,\infty)$, $\rho_0,\rho_1 \in \mathbb{R}$ with $\rho_0 \neq \rho_1$, and $\theta \in (0,1)$. Here,the norm for $\dot{B}^\rho_{(r,2),2}$ is the same as for $\dot{B}^\rho_{r,2}$, except the $L^r$ norm is replaced by the Lorentz space norm $L^{r,2}$. Thus, $\mathcal{B}_\theta = \dot{B}^\gamma_{(r',2),2} \supset \dot{B}^{\gamma}_{r',2}$, where $\gamma = \frac{d+1}{4} \theta$ and $\frac{1}{r'} = \frac{1+\theta}{2}$, and \eqref{e:w1} follows.
\end{proof}

\begin{remarks}
(1) Using the dispersive estimate \eqref{e:disp}, Foschi proved Lemma \ref{l:localbanach} for the case $\mathcal{B}_{\theta}=(L^2,L^1)_{\theta,2}.$ From the standard dispersive estimate for the wave equation \eqref{e:dispersive_W_standard}, this yields a frequency-localised version of \eqref{e:w2}. Littlewood--Paley theory appears to only allow us to remove this localisation in the restricted range $q, \tilde{q}\ge 2$.

(2) Related to the remark above, we finish this section with an observation about strong-type inhomogeneous estimates for the wave equation. Lemma \ref{l:localbanach} in the case $\mathcal{B}_{\theta}=(L^2,L^1)_{\theta,2}$ (due to Foschi) generates frequency-localised strong-type inhomogeneous estimates. To circumvent the issues with applying Littlewood--Paley theory to remove the localisation, rather than employing the strong form of Lemma \ref{l:localbanach} as carried out by Taggart, one may proceed with a bilinear interpolation argument in the $q\tilde q$-plane. This generates estimates of the form
	\begin{equation}\label{e:waveBil}
	\bigg\|	\int_{s<t} e^{i(t-s)\sqrt{-\Delta}}F(s,\cdot) \, \d s	\bigg\|_{L_t^{q}L^r_x	} \lesssim \|(\sqrt{-\Delta})^\gamma F\|_{L^{\tilde q'}_tL^{\tilde r' }_x}.
	\end{equation}
	and rests on the standard dispersive estimate rather than \eqref{e:dispersiveW_Besov_again}.
\end{remarks}

\section{Necessary conditions and further remarks} \label{section:necessary}

In this section, $C$ denotes a positive constant which may change from line to line.

\subsection{A necessary condition for the wave equation}

\begin{proposition}\label{p: necwave}
	Let $d\ge2,1\le q,\tilde q<\infty,2\le r,\tilde r\le\infty$ and $1\le p<\infty$. If the estimate
		\begin{equation}\label{alphawave}
	\bigg	\|\int_{\R} e^{i(t-s)\sqrt{-\Delta}}F(s,\cdot)\, \d s	\bigg\|_{\qrnorm{q,p}{r}} \lesssim \|	F	\|_{\qrnorm{\tilde{q}'}{\tilde{r}'}}
		\end{equation}
holds for all $F$ whose (spatial) Fourier support is compactly supported away from zero, then
\[\frac{1}{q}<(d-1)\bigg(\frac{1}{2}-\frac{1}{r}\bigg).
\]
\end{proposition}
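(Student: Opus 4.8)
The plan is to test the inhomogeneous estimate \eqref{alphawave} against a carefully chosen family of inputs $F$ concentrated near the light cone, and to extract the stated necessary condition $\frac{1}{q}<(d-1)(\frac12-\frac1r)$ by tracking how both sides scale. The natural guess is that the obstruction comes from a Knapp-type example: take $F(s,x) = \psi(s)\, e^{ix\cdot\xi_0}\,\eta(x)$, or more precisely a function whose spatial Fourier transform is a smooth bump supported in a small anisotropic slab of dimensions $1\times\lambda\times\cdots\times\lambda$ around a point $\xi_0$ at frequency $\sim 1$, and with $\psi$ a bump of width comparable to the temporal scale on which the wave packet coheres (here $\sim 1$, since the frequency is $\sim 1$, or one may insert a parameter). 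First I would compute the right-hand side: $\|F\|_{L_t^{\tilde q'}L_x^{\tilde r'}}$ is, up to constants, $\lambda^{(d-1)(1-\frac{1}{\tilde r'})} = \lambda^{(d-1)/\tilde r'}$ times the temporal factor, using that the spatial profile has an $L_x^{\tilde r'}$ norm governed by the measure $\lambda^{-(d-1)}$ of the dual tube (Hausdorff–Young / the fact that a bump of integral $\sim\lambda^{d-1}$ in Fourier has $L^{\tilde r'}_x$ norm $\sim \lambda^{(d-1)/\tilde r}$).

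Next I would estimate the left-hand side from below. The operator $\int_{\R} e^{i(t-s)\sqrt{-\Delta}}F(s,\cdot)\,\d s$ is $e^{it\sqrt{-\Delta}}$ applied to $\widehat{\psi}(\sqrt{-\Delta}\,)$-weighted initial data, so on a suitable time interval (of length $\gtrsim 1$, independent of $\lambda$, by stationary phase / nonstationary phase along the cone) the solution is $\gtrsim 1$ in modulus on a spatial tube dual to the Fourier slab, i.e.\ a tube of dimensions $1\times\lambda^{-1}\times\cdots\times\lambda^{-1}$, hence of measure $\sim\lambda^{-(d-1)}$. This gives a lower bound for the left side of the form $(\text{length of good time interval})^{1/q}\cdot\lambda^{-(d-1)/r}$, where the $L^{q,p}_t$ norm over an interval of fixed length is comparable to the $L^q_t$ norm. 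Comparing the two sides, the $\lambda$-powers must satisfy $-\,(d-1)/r \le (d-1)/\tilde r' + (\text{contribution from }\tilde r)$; after choosing $\tilde r$ and the temporal scales to make the inequality as tight as possible (e.g.\ sending an auxiliary parameter to extremes, or superposing many translated copies of the basic packet in the $t$ variable to boost the left-hand $L^q_t$ norm), one is forced into $\frac1q \le (d-1)(\frac12-\frac1r)$, and the strictness — the reason the inequality cannot be an equality — should come from a logarithmic divergence when one sums translated packets, exactly as in the failure of the endpoint; this is where I would be most careful.

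The main obstacle I anticipate is getting the \emph{strict} inequality rather than just $\le$. A single Knapp packet will only yield $\frac1q \le (d-1)(\frac12-\frac1r)$; to rule out equality one typically superposes $\sim N$ time-translated packets $F = \sum_{k=1}^{N} F_0(\cdot - t_k, \cdot)$ with $\{t_k\}$ separated, so that the right-hand side grows like $N^{1/\tilde q'}$ while the left-hand side, thanks to the packets' spatial spreading (each $e^{it\sqrt{-\Delta}}$ packet disperses and its tube drifts), grows at a strictly faster rate in $N$ — or, alternatively, one keeps the packets overlapping in $t$ so that the left side picks up an extra $\log N$ or power of $N$ that the right side does not. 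Making this superposition rigorous (controlling cross terms, ensuring the lower bound survives, choosing $\{t_k\}$ correctly) is the delicate point; the scaling bookkeeping in the previous paragraph is routine by comparison. I would also need to double-check the role of the hypothesis $q,\tilde q<\infty$ and $p<\infty$ only enters to make the Lorentz norm over a bounded interval comparable to the Lebesgue norm, which is standard.
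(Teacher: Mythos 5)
Your approach is built on the wrong counterexample, and I do not think it can be repaired to give the stated condition. A Knapp packet (Fourier support in a $1\times\lambda\times\cdots\times\lambda$ cap at frequency $\sim 1$) tests small-scale concentration: the solution stays coherent on a tube of spatial measure $\sim\lambda^{-(d-1)}$ for times $|t|\lesssim\lambda^{-2}$, and running your bookkeeping gives the requirement $\frac{2}{q}\le(d-1)(1-\frac1r-\frac1{\tilde r})$, i.e. at best $\frac1q\le\frac{d-1}{2}(1-\frac1r)=(d-1)(\frac12-\frac{1}{2r})$. This carries the admissibility factor $\sigma=\frac{d-1}{2}$ and is strictly weaker than the acceptability condition $\frac1q<(d-1)(\frac12-\frac1r)$ you must prove, whose factor is $2\sigma=d-1$; for $r$ near $2$ the two are not even close (the proposition asserts failure for \emph{every} finite $q$ when $r=2$, which no single-scale Knapp computation detects). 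Note also that the conclusion involves $(q,r)$ only, independently of $(\tilde q,\tilde r)$ — a strong hint that the correct input $F$ should be a fixed function with no small parameter, making the right-hand side a harmless finite constant. That is exactly what the paper does: it takes $\widehat F(t,\xi)=\psi(t)\varphi(|\xi|)$ with $\psi,\varphi$ fixed bumps, and uses the asymptotics of the Fourier transform of surface measure on $\mathbb{S}^{d-1}$ to show that for every $t\ge100$ the solution has modulus $\gtrsim t^{-(d-1)/2}$ on the annulus $\{\,||x|-t|\le\delta_0\}$ of measure $\sim t^{d-1}$. Hence $\|u(t)\|_{L^r_x}\gtrsim t^{-\alpha}$ with $\alpha=(d-1)(\frac12-\frac1r)$: the obstruction is the large-time decay rate dictated by spreading onto the light cone, not short-time coherence.

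This also shows why your plan for obtaining strictness is off track. You correctly observe that a single packet can only yield a non-strict inequality and propose to superpose $N$ time-translated packets to extract a logarithm, flagging the cross-term analysis as the delicate point; but no superposition is needed. With the paper's example, strictness is automatic from the hypothesis $p<\infty$: at the endpoint $\frac1q=\alpha$ the lower bound $t^{-1/q}$ on $[100,\infty)$ lies in $L^{q,\infty}_t$ but in no $L^{q,p}_t$ with $p<\infty$ (this is the computation $\int_0^1\lambda^{p(1-\frac{1}{\alpha q})-1}\,\d\lambda=\infty$ for $\alpha q\le1$), which is precisely why the paper's positive results on the critical line must use the weak norm $L^{q,\infty}_t$. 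So a single fixed, radial, frequency-localised forcing already rules out the endpoint, with no cross terms to control.
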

This proposition shows that inhomogeneous Strichartz estimates for the wave equation generically fail unless the condition $\frac{1}{q}<(d-1)(\frac{1}{2}-\frac{1}{r})$ holds. By standard arguments, it follows that the corresponding estimate to \eqref{alphawave} with the $s$-integral taken over $\int_{-\infty}^t$ also fails unless $\frac{1}{q}<(d-1)(\frac{1}{2}-\frac{1}{r})$.
\begin{proof}
	Choose $\widehat{F}(t,\xi)=\psi(t)\varphi(|\xi|)$ where $\psi$ is an even $C^\infty(\R)$ function supported in $(-c_0,c_0)$ for $c_0 > 0$ sufficiently small and $\varphi\in C_0^{\infty}[\frac{1}{2},2]$. Setting $\tilde{\varphi}(r)=r^{\frac{d-1}{2}}\varphi(r)$, by using polar coordinates, we have 
	\[	\int_{\R} e^{i(t-s)\sqrt{-\Delta}}F(s,\cdot)(x)\, \d s =C\left(|x|^{-\frac{d-1}{2}}I(x,t) + |x|^{-\frac{d-1}{2}}II(x,t)\right),
	\]
	where
	\begin{align*}
	I(x,t)=&\int_{\R}\int_0^{\infty} e^{i(t-s)r}\tilde\varphi(r)\cos(r|x|-\tfrac{\pi(d-1)}{4})\, \d r\ \psi(s)\, \d s,\\
	II(x,t)=&\int_{\R}\int_0^{\infty} e^{i(t-s)r}r^{\frac{d-3}{2}}\varphi(r)E(r|x|)\, \d r\ \psi(s)\, \d s.
	\end{align*}
	Here, the error function $E$ arises from the asymptotic estimate
	\begin{align*} 
	\int_{\mathbb{S}^{d-1}} e^{ix \cdot \theta} \, \mathrm{d}\theta & =C|x|^{-\frac{d-2}{2}}J_{\frac{d-2}{2}}(|x|) \\
	& =C|x|^{-\frac{d-1}{2}}\bigg(\cos(r|x|-\tfrac{\pi(d-1)}{4})+E(|x|)	\bigg),
	\end{align*}
	where $E(\rho)\lesssim \rho^{-1}$ for $\rho \geq 1$ (see, for example, \cite{SteinWeiss}). Then 
	\[ I(x,t)=\frac{1}{2}\left(e^{-i\pi(d-1)/4} I_-(x,t)+e^{i\pi(d-1)/4} I_+(x,t)
	\right)
	\]
	where
	\begin{align*}
	I_{\pm}(x,t)&=\int_{\R}\int_0^{\infty} e^{-i[\pm|x|-(t-s)]r}\tilde \varphi(r)\, \d r\ \psi(s)\, \d s\\
	&=\int_{\R} \widehat{\tilde{\varphi}}(\pm|x|-(t-s)) \psi(s)\, \d s\\
	&=\vartheta\ast \psi(\pm|x|-t)
	\end{align*}
	and we have set $\vartheta:=\widehat{\tilde{\varphi}}$. Since $\vartheta(0)>0,$ it follows that $\vartheta\ast \psi(0)>0$ for a sufficiently small choice of $c_0.$ By continuity, there exists $\delta_0>0$
such that $\vartheta\ast \psi(t)>0$ for $|t|\le\delta_0.$ Thus,
\[|I_+(x,t)|\gtrsim 1 \quad\text{for}\quad\big||x|-t	\big|\le\delta_0.
\]
Also, $|\vartheta(t)|\lesssim 1/(1+|t|)$ for all $t\in\R$ by an integration by parts argument and since $\varphi\in C^\infty_0(\R)$.
This implies
\[|I_-(x,t)| \lesssim \frac{1}{1+||x|+t|}\lesssim \frac{1}{t},
\]
and therefore $|I(x,t)|\gtrsim1$, for $\big||x|-t	\big|\le\delta_0$ and $t\ge100.$
Also, $|II(x,t)| \lesssim  \int_{r\sim1} |E(r|x|)|\, \d r\lesssim1/|x|,$
so
\begin{align*}
\bigg |\int_{\R} e^{i(t-s)\sqrt{-\Delta}}F(s,\cdot)\, \d s\bigg|&\gtrsim |x|^{-\frac{d-1}{2}}\big(I-II\big)\\
&\gtrsim t^{-\frac{d-1}{2}}
\end{align*}
if $\big||x|-t	\big|\le\delta_0$ and $t\ge100.$ So, for $t\ge100,$
\begin{align*}
\bigg \|\int_{\R} e^{i(t-s)\sqrt{-\Delta}}F(s,\cdot)\, \d s\bigg\|_{L^r_x}&\ge \bigg \|\int_{\R} e^{i(t-s)\sqrt{-\Delta}}F(s,\cdot)\, \d s\bigg\|_{L^r_x(||x|-t|\le\delta_0)}\\
&\gtrsim t^{(d-1)(\frac{1}{r}-\frac{1}{2})}.
\end{align*}
If follows that
\[\bigg \|\int_{\R} e^{i(t-s)\sqrt{-\Delta}}F(s,\cdot)\, \d s\bigg\|_{L^{q,p}_tL^r_x}\gtrsim \bigg(\int_0^1\lambda^{p(1-\frac{1}{\alpha q})-1}\,\d \lambda			\bigg)^{1/p}
\]
where $\alpha=(d-1)(\frac{1}{2}-\frac{1}{r})$. On the other hand, clearly $\|F\|_{L_t^{\tilde q'}{L_x^{\tilde r'}}}<\infty. $
Therefore, the condition 
$\frac{1}{q}< \alpha$
is necessary for \eqref{alphawave}, as claimed.
\end{proof}

\subsection{Some limitations}
Recall the crucial assumption 
\begin{equation} \label{e:crucialcondition}
\sigma\theta=\frac{1}{q}<\frac{1}{\tilde q'}=1-\sigma\bigg(\frac{\tilde\theta}{2}-\frac{\theta}{2}\bigg)
\end{equation}
in Theorem \ref{t:qweak}. By translation invariance considerations, the assumption $\frac{1}{q} \leq \frac{1}{\tilde q'}$ is natural, and thus we have not addressed the case $q = \tilde{q}'$ in this note. Our approach as it stands does not permit this case since we rely on Bourgain's summation trick (Lemma \ref{l:btrick}) and this seems to constrain us to a situation where $q$ can freely move in some open range. 

The strict inequality in condition \eqref{e:crucialcondition} also means we are unable to address the case $q=1$ with our approach. Related to this, the estimate
\begin{equation} \label{e:Beceanu_stronger}
\int_\mathbb{R} \|e^{it\Delta}f\|_{L^{6,\infty}(\mathbb{R}^3)} \, \mathrm{d}t \lesssim \|f\|_{L^{6/5,1}(\mathbb{R}^3)}
\end{equation}
was claimed to be true in \cite[Proposition 1.2]{Be}. By Minkowski's inequality, this implies the inhomogeneous Strichartz estimate 
\begin{equation} \label{e:Beceanu_inhomo}
\bigg\| \int_{s<t} e^{i(t-s)\Delta}F(s,\cdot)\, \mathrm{d}s\bigg \|_{L^1_tL^{6,\infty}_x(\mathbb{R} \times \R^3)} \lesssim\|F\|_{L^{1}_t L^{6/5,1}_x(\mathbb{R} \times \mathbb{R}^3)},
\end{equation}
which seems to provide a particular case with $q = \tilde{q}' = 1$ on the critical line $\frac{1}{q} = d(\frac{1}{2}-\frac{1}{r})$, albeit in restricted weak-type form with respect to the spatial variable (see \cite[Theorem 1.3]{Be}). Unfortunately, it seems that \eqref{e:Beceanu_stronger} and \eqref{e:Beceanu_inhomo} are both false. Through personal communications, M. Beceanu informed us that a variant form of \eqref{e:Beceanu_stronger} is still possible \cite{BFS}.

More generally, the estimate
\begin{equation} \label{e:Beceanu_inhomo_d}
\bigg\| \int_\mathbb{R} e^{i(t-s)\Delta}F(s,\cdot) \, \mathrm{d}s \bigg\|_{L^1_t L^{\frac{2d}{d-2},\infty}_x} \lesssim \| F \|_{L^1_tL^{\frac{2d}{d-2},1}_x}
\end{equation}
is false for all $d \geq 3$, and hence the stronger estimate
\begin{equation*}
\int_{\mathbb{R}} \|e^{it\Delta}f\|_{L^{\frac{2d}{d-2},\infty}} \, \mathrm{d}t \lesssim \|f\|_{L^{\frac{2d}{d-2},1}} 
\end{equation*}
is also false for all $d \geq 3$. To see the failure of \eqref{e:Beceanu_inhomo_d}, consider the function $F$ given by
\[
F(t,x) = \frac{1}{1 + 4t^2} \exp(-\tfrac{1}{2}|x|^2) 
\]
so that 
\[
\widehat{F}(\tau,\xi) = C \exp(-\tfrac{1}{2}(|\xi|^2 + |\tau|))
\]
for some constant $C$. In this case, we have
\begin{align*}
\int_{\mathbb{R}^d} e^{i(x \cdot \xi - t|\xi|^2)}  \widehat{F}(-|\xi|^2,\xi) \, \mathrm{d}\xi & = C \int_{\mathbb{R}^d} e^{ix \cdot \xi} e^{-(1 + it)|\xi|^2} \, \mathrm{d}\xi \\
& = C \frac{1}{(1 + it)^{d/2}} \exp\bigg(-\frac{|x|^2}{4(1+it)}\bigg).
\end{align*}
Therefore, if $r \in [1,\infty]$, it follows that
\begin{align*}
\bigg\| \int_\mathbb{R} e^{i(t-s)\Delta}F(s,\cdot) \, \mathrm{d}s \bigg\|_{L^{r,\infty}_x} \sim |t|^{d(\frac{1}{r} - \frac{1}{2})}
\end{align*}
holds for all $|t| \gtrsim 1$. Thus, in the case $r = \frac{2d}{d-2}$ with $d \geq 3$, we have
\begin{align*}
\bigg\| \int_\mathbb{R} e^{i(t-s)\Delta}F(s,\cdot) \, \mathrm{d}s \bigg\|_{L^{\frac{2d}{d-2},\infty}_x} \sim \frac{1}{|t|}
\end{align*}
for all $|t| \geq 1$ and hence
\begin{align*}
\bigg\| \int_\mathbb{R} e^{i(t-s)\Delta}F(s,\cdot) \, \mathrm{d}s \bigg\|_{L^1_t L^{\frac{2d}{d-2},\infty}_x} = \infty.
\end{align*}
On the other hand, it is that clear that
$
\| F \|_{L^{a}_tL^{b,1}_x} < \infty
$
for any $a,b \in [1,\infty)$.

It is clear that the above counterexample does not rule out the possibility of \eqref{e:Beceanu_inhomo_d} being true with the weak-type space $L^{1,\infty}_t$ on the left-hand side. Related to this, we remark that estimates of the shape $L^1_t L^{r'}_x \to L^{1,\infty}_t L^r_x$ are of substantial interest since they would lead to an improvement in the result of Foschi and Vilela \cite{Fos, Vil} in the case $q = \tilde{q}'$.

\begin{acknowledgements}
This work was supported by JSPS Grant-in-Aid for Young Scientists A no. 16H05995 (Bez), a JSPS Postdoctoral Research Fellowship no. 18F18020 (Cunanan), and NRF-2015R1A4A1041675 (Lee). 
\end{acknowledgements}


\begin{thebibliography}{MMM}
	
	\bibitem{Be} M. Beceanu, \textit{New estimates for a time-dependent Schr\"odinger equation}, Duke Math. J. \textbf{159} (2011), 417--477.		
	
	\bibitem{BFS} M. Beceanu, J. Fr\"ohlich, A. Soffer, \textit{Liouville's equation with random potential in Schatten-von Neumann classes}, preprint.
	
	\bibitem{BerghLofstrom} J. Bergh, J. L\"ofstr\"om, \textit{Interpolation Spaces: An Introduction}, Springer--Verlag, New York, 1976.
	
	\bibitem{Bourgain} J. Bourgain, \textit{Estimations de certaines functions maximales}, C. R. Acad. Sci. Paris \textbf{310} (1985) 499--502.
	
	\bibitem{CazWei} T. Cazenave, F.B. Weissler, \textit{The Cauchy problem for the nonlinear Schr\"odinger equation in $H^1$}, Manuscripta Math. \textbf{61} (1988), 477--494.  
	
	
	
	\bibitem{FangWang} D. Fang, C. Wang, \textit{Some remarks on homogeneous estimates for wave equation}, Nonlinear Anal. \textbf{65} (2006), 697--706.
	
	\bibitem{Fos} D. Foschi, \textit{Inhomogeneous Strichartz estimates}, J. Hyperbolic Differ. Equ. \textbf{2} (2005), 1--24.
	
	\bibitem{GiVe} J. Ginebre, G. Velo, \textit{Generalized Strichartz inequalities for the wave equation}, J. Func. Anal. \textbf{133} (1995), 50--68.
	
	\bibitem{GLNY} Z. Guo, J. Li, K. Nakanishi, L. Yan, \textit{On the boundary Strichartz estimates for wave
	and Schr\"odinger equation}, J. Differential Equations \textbf265 (2018), 5656--5675.
		
	
	\bibitem{TKat} T. Kato, \textit{An $L^{q,r}$-theory for nonlinear Schr\"odinger equations, Spectral and scattering theory and applications}, Adv. Stud. Pure Math. \textbf{23}, Math. Soc. Japan, Tokyo (1994), 223--238.
	
	\bibitem{KTao} M. Keel, T. Tao, \textit{Endpoint Strichartz estimates}, Amer. J. Math. \textbf{120} (1998), 955--980.
	
	\bibitem{Koh} Y. Koh, \textit{Improved inhomogeneous Strichartz estimates for the Schr\"odinger equation}, J. Math. Anal. Appl. \textbf{373}
	(2011), 147--160.
	
	\bibitem{KohSeo} Y. Koh, I. Seo, \textit{Inhomogeneous Strichartz estimates for Schr\"odinger's equation}, J. Math. Anal. Appl. \textbf{442}
	(2016), 715--725.
	
	\bibitem{LSeo} S. Lee, I. Seo, \textit{A note on unique continuation for the Schr\"odinger equation}, J. Math. Anal. Appl. \textbf{389}
	(2012), 461--468.
	
	\bibitem{LSeo2} S. Lee, I. Seo, \textit{Sharp bounds for multiplier operators of negative indices associated with degenerate curves}, Math. Z. \textbf{267} (2011), 291--323.
	

\bibitem{M-S} S. J. Montgomery-Smith, \textit{Time decay for the bounded mean oscillation of solutions of the Schr\"odinger and wave equations}, Duke Math. J. \textbf{91} (1998), 393--408.	
	
	
	

\bibitem{SteinWeiss} E. M. Stein, G. Weiss, \textit{Introduction to Fourier Analysis on Euclidean Spaces},
Princeton Univ. Press, Princeton, N. J., 1971.	
	
	\bibitem{Str} R.S. Strichartz, \textit{Restriction of Fourier transform to quadratic surfaces and decay of solutions of wave equations}, Duke
	Math. J. \textbf{44} (1977), 705--774.
	
	\bibitem{Tag} R. Taggart, \textit{Inhomogeneous Strichartz estimates}, Forum Math. \textbf{22} (2010), 825--853.
	
	
	\bibitem{Vil} M. C. Vilela, \textit{Inhomogeneous Strichartz estimates for the  Schr\"odinger equation}, Trans. Amer. Math. Soc. \textbf{359} (2007),
	2123--2136.
	
	
	\bibitem{Yaj} K. Yajima, \textit{Existence of solutions for Schr\"odinger evolution equations}, Comm. Math. Phys.
	\textbf{110} (1987), 415--426.
	
\end{thebibliography}
\end{document}